\newcommand*{\doi}[1]{doi:\href{http://dx.doi.org/#1}{\nolinkurl{#1}}}
\newcommand*{\xurl}[1]{URL \url{#1}}
\newtheorem{theorem}{Theorem}
\newcommand*{\abs}[1]{\left\lvert#1\right\rvert}
\newcommand*{\norm}[1]{\left\lVert#1\right\rVert}
\newcommand*{\intcc}[1]{\left[#1\right]}
\newcommand*{\bigpar}[1]{\bigl(#1\bigr)}
\begin{document}
\title{$N$ derivatives are necessary for order $N+1$ convergence in quadrature: a converse result}
\date{}
\author{Jeffrey Tsang\\\href{mailto:jeffrey.tsang@ieee.org}{\normalsize jeffrey.tsang@ieee.org}}
\maketitle
\begin{abstract}
Results on the error bounds of quadrature methods are well known --- most state that if the method has degree $N$, and the integrand has $N$ derivatives, then the error is order $N+1$. We prove here a converse: that if the integrand fails to have $N$ derivatives, even only at a finite number of points, no method, regardless of its degree, can guarantee convergence more than order $N$. Even if the integrand fails to have $N$ derivatives at just 3 (for even $N$, 2) points, no method can produce order more than $N+1$ convergence. This is done by an adversarial proof: we explicitly construct the functions that exhibit such error; simple splines turn out to suffice.
\end{abstract}

\section{Introduction.}
It is well known that a quadrature method of degree $N$ has an error bound of order $N+1$; many proofs rely on Taylor's theorem, which then requires the existence of the $N$th derivative. Of course, it can be asked what happens if the $N$th derivative fails to exist.

The question was first investigated in \cite{stroud}, which proved error bounds using lower derivatives. These bounds do not attain the full order allotted by the degree of the method; instead they are consigned to no more than the number of derivatives plus one.

Recently attention has returned to the issue of integration of insufficiently nice functions, where \cite{engel} proved a few error bounds using lower order derivatives than the degrees of the methods involved. \cite{cruz} looked into the trapezoidal rule and Simpson's rule and found that, for merely once differentiable functions, the trapezoidal rule had a better bound than Simpson's rule.

A different direction is to optimize quadrature rules under the constraint that the integrand is only once differentiable; this work is led by \cite{ujevic} and there are quite a few papers on the subject. The optimal rules indeed differ from the established Gauss-Legendre rules. There is also a focus on Simpson's inequality and its applications, as demonstrated by \cite{huy} who proved a bound using an arbitrary order derivative.

One unifying theme in all these results is the inability to achieve $N+2$th order convergence with only the $N$th derivative. We settle this question once and for all, by proving this impossibility. In other words, $N$ derivatives are not only sufficient (in conjunction with a suitable method), but also a \emph{necessary} condition of achieving $N+1$th order convergence.

We prove the contrapositive by exhibiting functions, exactly $N$ times differentiable, that bound the error of any method from below at $N+1$th order.

\section{Results.}
For space, we use the symbol $k-1$ instead of $N$ in the ensuing discussion.

\begin{theorem}
\label{t1}
\hypertarget{t1}{Let} the region of integration $[a,b]$ be specified, and $k\geq1$ be a fixed natural number. Let $\{I_n\}$ be any sequence of internal quadrature methods, with
\[I_n(f)=\sum_{i=1}^nw_{n,i}f(x_{n,i}),\quad a\leq x_{n,1}<x_{n,2}<\cdots<x_{n,n}\leq b\text{,}\]
with $w_{n,i},x_{n,i}$ fixed. Define the exact integral to be
\[I(f)=\int_a^bf(x)\,\mathrm{d}x\text{.}\]
Then there exists a sequence of functions $\{f_n\}$ along with a sequence of finite sets $\{S_n\}$ satisfying
\begin{itemize}
\item $S_n\subset[a,b]$, and has size $\abs{S_n}\leq4n-3$
\item $f_n\in C^{k-1}[a,b]\setminus C^k[a,b]$
\item $f_n\in C^{\omega}([a,b]\setminus S_n)$
\item $\displaystyle\norm{f_n^{(k-1)}}_{\infty,[a,b]}\leq\frac{(b-a)}{4}k!$
\item $\displaystyle\norm{f_n^{(k)}}_{\infty,[a,b]\setminus S_n}=k!$
\end{itemize}
such that the sequence of integration errors of $I_n$ on $f_n$ has order at most $k$, or in other words
\[\limsup_{n\rightarrow\infty}\frac{\abs{I_n(f_n)-I(f_n)}}{n^k}>0\text{.}\]
\end{theorem}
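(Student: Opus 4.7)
The plan is adversarial: for each $n$, build $f_n$ with $f_n(x_{n,i})=0$ at every node (so $I_n(f_n)=0$) while $\abs{I(f_n)}\geq C n^{-k}$ for some fixed positive $C$. The natural way is to drop a scaled copy of one model bump into each of the $n+1$ node gaps $[x_{n,i},x_{n,i+1}]$ (with $x_{n,0}:=a$, $x_{n,n+1}:=b$, and gap lengths $h_i:=x_{n,i+1}-x_{n,i}$); because each bump has integral proportional to $h_i^{k+1}$, the total true integral is a positive multiple of $\sum_i h_i^{k+1}$, and Jensen's inequality on the convex $t\mapsto t^{k+1}$ gives $\sum h_i^{k+1}\geq(b-a)^{k+1}/(n+1)^k$, which is exactly the rate the theorem asks for.

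For the model I would take $\phi(y):=y^k(1-y)^k$ on $[0,1]$ extended by zero to $\real$. Since $\phi$ vanishes to order $k$ at both endpoints, the extension lies in $C^{k-1}(\real)$ and is analytic off $\{0,1\}$. Rodrigues' formula for Legendre polynomials gives $\phi^{(k)}(y)=(-1)^k k!\,P_k(2y-1)$, whence $\norm{\phi^{(k)}}_\infty=k!$ is attained at $y\in\{0,1\}$; integrating once via $\int_{-1}^u P_k=(P_{k+1}(u)-P_{k-1}(u))/(2k+1)$ yields $\norm{\phi^{(k-1)}}_\infty\leq k!/(2k+1)\leq k!/4$ for $k\geq 2$, and the direct computation $\norm{y(1-y)}_\infty=1/4$ handles $k=1$. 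Also $\int_0^1\phi=(k!)^2/(2k+1)!>0$. Placing $g_i(x):=\sgn_i h_i^k\phi\bigpar{(x-x_{n,i})/h_i}$ on each nondegenerate gap and $f_n:=\sum_i g_i$, the disjoint supports and the derivative scaling $g_i^{(j)}=\sgn_i h_i^{k-j}\phi^{(j)}(\cdot)$ deliver $\norm{f_n^{(k)}}_\infty=k!$ exactly and $\norm{f_n^{(k-1)}}_\infty\leq\max_i h_i\cdot k!/4\leq(b-a)k!/4$, independent of the choice of signs $\sgn_i\in\{\pm 1\}$ yet to be fixed.

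At an interior node $x_{n,i}$, the one-sided values of $f_n^{(k)}$ are $\sgn_{i-1}(-1)^k k!$ on the left and $\sgn_i\cdot k!$ on the right. For $k$ odd, $\sgn_i\equiv 1$ produces jumps of $\pm 2k!$ at every interior node, giving $S_n\subseteq\{x_{n,1},\ldots,x_{n,n}\}$ with $\abs{S_n}\leq n$, and $\abs{I(f_n)}\geq(k!)^2(b-a)^{k+1}/((2k+1)!(n+1)^k)$ directly. For $k$ even the choice $\sgn_i\equiv 1$ produces no jump at all, so I would instead flip the sign only of the gap with smallest $h_i$: this introduces jumps at its (at most two) adjacent interior nodes, giving $\abs{S_n}\leq 2$, and because $\min h_i\leq(b-a)/(n+1)$, the effective integrand sum $\sum h_i^{k+1}-2(\min h_i)^{k+1}$ is still bounded below by $(1-2/(n+1))(b-a)^{k+1}/(n+1)^k>0$ for $n\geq 2$. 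In both parities $n^k\abs{I_n(f_n)-I(f_n)}$ stays bounded below by a positive constant as $n\to\infty$, giving the required $\limsup>0$; the small cases $n=1,2$ are easy ad hoc variations sitting well inside the $\abs{S_n}\leq 4n-3$ budget. The one genuinely delicate step is the uniform bound $\norm{\phi^{(k-1)}}_\infty\leq k!/4$; the Rodrigues/Legendre route above is the cleanest argument I see, and once it is in hand everything else---vanishing at nodes, $C^{k-1}$-gluing from matched boundary data, and the Jensen lower bound---is routine bookkeeping.
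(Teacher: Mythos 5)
Your construction is correct in substance but takes a genuinely different route from the paper's. The paper splices four shifted copies of $\pm(x-c)^k$ into each gap so that $f_n^{(k)}$ is the piecewise constant $\pm k!$ and $f_n^{(k-1)}$ is piecewise linear with slope $\pm k!$; every derivative bound then reads off immediately, and the per-gap integral is computed exactly as $(\Delta x_i)^{k+1}/4^k$. You instead rescale the single bump $\phi(y)=y^k(1-y)^k$ into each gap and invoke Rodrigues' formula $\phi^{(k)}(y)=(-1)^kk!\,P_k(2y-1)$ together with the integrated recurrence to get $\norm{\phi^{(k)}}_\infty=k!$ and $\norm{\phi^{(k-1)}}_\infty\leq k!/(2k+1)\leq k!/4$ (with $k=1$ checked directly). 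Your bump buys one analytic piece per gap and an explicit Beta-function integral $(k!)^2/(2k+1)!$ at the price of Legendre machinery; the paper's spline buys trivial derivative bounds at the price of parity case analysis and quarter-points. The shared skeleton --- vanish at every node so $I_n(f_n)=0$, per-gap integral proportional to $h_i^{k+1}$, power-mean lower bound $\sum h_i^{k+1}\geq(b-a)^{k+1}/(n+1)^k$ --- is identical, and that is the real content of the theorem.

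Two points need repair. First, for even $k$ your $S_n$ is undersized: $S_n$ must contain every point where $f_n$ fails to be \emph{analytic}, not merely where $f_n^{(k)}$ jumps. At an interior node separating two nondegenerate gaps your function agrees with two distinct polynomials (their root sets differ), so it is not real-analytic there even when the one-sided $k$th derivatives match; the correct $S_n$ is the node set, of size at most $n$. This still satisfies $\abs{S_n}\leq4n-3$ for all $n\geq1$, so the theorem survives, but the claim $\abs{S_n}\leq2$ does not. (The paper dodges this for even $k$ because its piece $(x-x_{n,j})^k$ straddles the node as a single polynomial.) Second, the degenerate configurations you defer are exactly $n\leq2$ with all nodes at endpoints of $[a,b]$, where your $f_n$ collapses to a single polynomial on $[a,b]$ and hence lies in $C^k$; these do need the ad hoc patch you promise (e.g., split the one gap in half and use two bumps, giving $\abs{S_n}=1$ within the budget $4n-3$). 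Neither issue affects the $\limsup$, which only sees large $n$, so the proof stands once the bookkeeping is corrected.
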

\begin{proof}
By adversarial construction.

Let $f_n$ be a $k$th order spline such that $f_n(x_{n,i})=0$ for all $i$, and have maximal integral with unit leading coefficient. First, for $1\leq i\leq n-1$ define $\Delta x_i=x_{n,i+1}-x_{n,i}$ and the quarter-interpolated points
\begin{alignat*}{3}
\bar{x}_{n,i+1/4}&=\frac{3x_{n,i}}{4}&&+\hspace{0.049in}\frac{x_{n,i+1}}{4}&&=x_{n,i}+\frac{\Delta x_i}{4}\\
\bar{x}_{n,i+1/2}&=\hspace{0.039in}\frac{x_{n,i}}{2}&&+\hspace{0.049in}\frac{x_{n,i+1}}{2}&&=x_{n,i}+\frac{\Delta x_i}{2}\\
\bar{x}_{n,i+3/4}&=\hspace{0.039in}\frac{x_{n,i}}{4}&&+\frac{3x_{n,i+1}}{4}&&=x_{n,i}+\frac{3\Delta x_i}{4}\text{.}
\end{alignat*}
Then explicitly, if $k$ is odd,
\begin{align*}f_n&=\begin{cases}
-(x-x_{n,1})^k,\;&x\in\intcc{a,x_{n,1}}\\
(x-x_{n,i})^k,\;&x\in\intcc{x_{n,i},\bar{x}_{n,i+1/4}}\\
\left(x-\bar{x}_{n,i+1/2}\right)^k+2\left(\frac{\Delta x_i}{4}\right)^k,\;&x\in\intcc{\bar{x}_{n,i+1/4},\bar{x}_{n,i+1/2}}\\
-\left(x-\bar{x}_{n,i+1/2}\right)^k+2\left(\frac{\Delta x_i}{4}\right)^k,\;&x\in\intcc{\bar{x}_{n,i+1/2},\bar{x}_{n,i+3/4}}\\
-(x-x_{n,i+1})^k,\;&x\in\intcc{\bar{x}_{n,i+3/4},x_{n,i+1}}\\
(x-x_{n,n})^k,\;&x\in\intcc{x_{n,n},b}
\end{cases}
\intertext{for all valid $1\leq i\leq n-1$; and if $k$ is even,}
f_n&=\begin{cases}
(x-x_{n,1})^k,\;&x\in\intcc{a,\bar{x}_{1+1/4}}\\
-\left(x-\bar{x}_{n,i+1/2}\right)^k+2\left(\frac{\Delta x_i}{4}\right)^k,\;&x\in\intcc{\bar{x}_{n,i+1/4},\bar{x}_{n,i+3/4}}\\
(x-x_{n,j})^k,\;&x\in\intcc{\bar{x}_{(j-1)+3/4},\bar{x}_{n,j+1/4}}\\
(x-x_{n,n})^k,\;&x\in\intcc{\bar{x}_{(n-1)+3/4},b}
\end{cases}
\end{align*}
for all valid $1\leq i\leq n-1$ and $2\leq j\leq n-2$.

An easier way to understand this is to consider the ``basic unit'' of structure between two $x_{n,i}$, shown in Figure \ref{base} between $x_{n,i}=0$ and $x_{n,i+1}=4$. The concept is to splice 4 (shifted) copies of the curve $x^k$ together such that the two outer pieces are convex and the inner pieces concave. If $k$ is even, then it is analytic at the endpoints and midpoint.

\begin{figure}[h]
\centering\includegraphics[width=\textwidth,keepaspectratio]{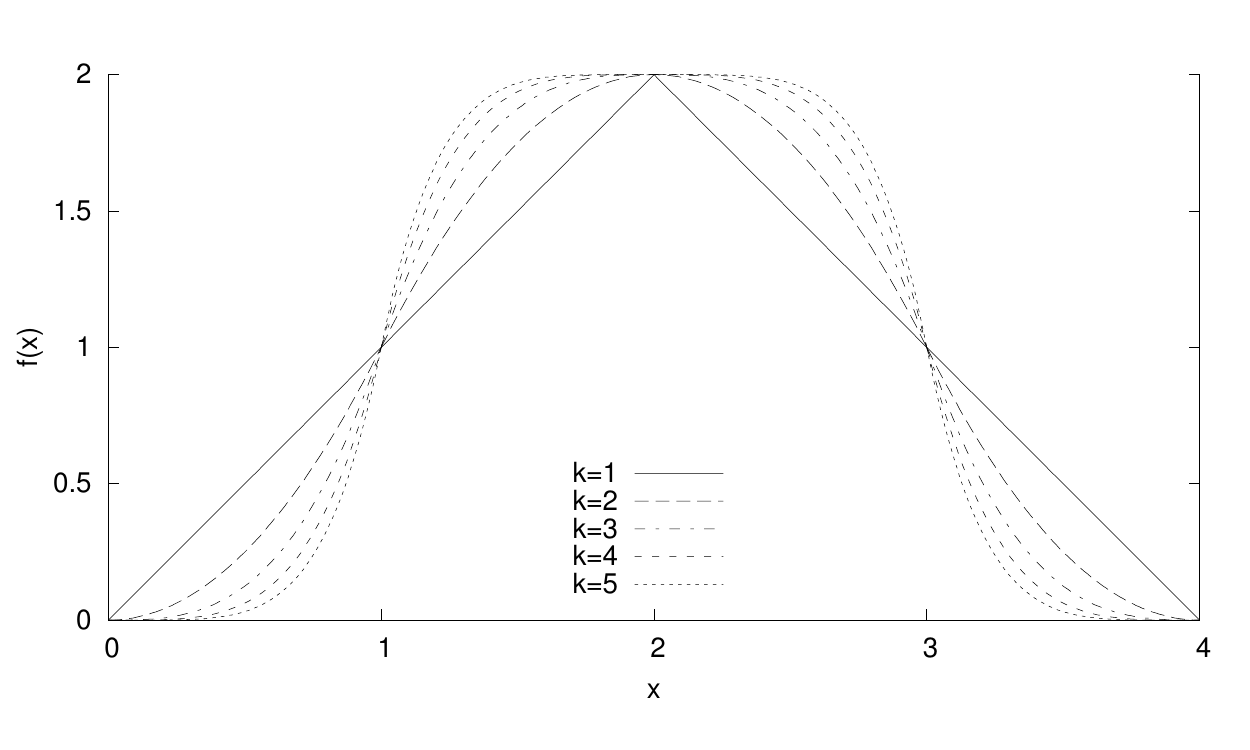}
\caption{A basic unit of $f_n$ for various values of $k$, shown for $x_{n,i}=0$ and $x_{n,i+1}=4$}
\label{base}
\end{figure}

Now it is clear that $f_n$ is $C^{k-1}[a,b]$, is analytic except at most at the points
\[S_n=\left\{x_{n,i},\frac{3x_{n,i}+x_{n,i+1}}{4},\frac{x_{n,i}+x_{n,i+1}}{2},\frac{x_{n,i}+3x_{n,i+1}}{4}\right\}_{i=1}^{n-1}\bigcup\;\{x_{n,n}\}\text{,}\]
a set of size $4n-3$ (only $2n-2$ points if $k$ is odd). The $k-1$th derivative of $f_n$ exists and is piecewise linear, with slope $\pm k!$; the length of a piece is bounded above by $(b-a)/4$, possible only if all points $x_{n,i}$ are at endpoints, thus the derivative is bounded by that product. If $k$ is odd the piece can have length $(b-a)/2$, but its midpoint is at 0 and that does not change the bound. The $k$th derivative is the piecewise constant function $\pm k!$ where it exists.\pagebreak

Note that $I_n(f_n)=0$ by construction; it remains only to compute $I(f_n)$, which is easily done by considering the basic unit:
\begin{align*}
\int_{x_{n,i}}^{x_{n,i+1}}\!\!f_n(x)\,\mathrm{d}x&
\begin{cases}\displaystyle=\int_0^{\frac{\Delta x_i}{4}}\!x^k\,\mathrm{d}x+\int_{-\frac{\Delta x_i}{4}}^0\!x^k+2\left(\textstyle\frac{\Delta x_i}{4}\right)^k\!\mathrm{d}x\\
\displaystyle\!\!\quad{}+\int_0^{\frac{\Delta x_i}{4}}\!(-x)^k+2\left(\textstyle\frac{\Delta x_i}{4}\right)^k\!\mathrm{d}x+\int_{-\frac{\Delta x_i}{4}}^0\!(-x)^k\,\mathrm{d}x\!\!&\text{ if $k$ is odd}\\
\displaystyle=\int_0^{\frac{\Delta x_i}{4}}\!x^k\,\mathrm{d}x+\int_{-\frac{\Delta x_i}{4}}^{\frac{\Delta x_i}{4}}\!(-x)^k+2\left(\textstyle\frac{\Delta x_i}{4}\right)^k\!\mathrm{d}x\\
\displaystyle\!\!\quad{}+\int_{-\frac{\Delta x_i}{4}}^0\!x^k\,\mathrm{d}x&\text{ if $k$ is even}
\end{cases}\displaybreak[0]\\
&\hspace{0.12in}=\int_{-\frac{\Delta x_i}{4}}^{\frac{\Delta x_i}{4}}\!2\left(\frac{\Delta x_i}{4}\right)^k\!\mathrm{d}x\quad\bigpar{\forall k}\\
&\hspace{0.12in}=\frac{(\Delta x_i)^{k+1}}{4^k}\text{.}
\end{align*}
The pieces for the endpoints are easy:
\begin{align*}
\int_{a}^{x_{n,1}}f_n(x)\,\mathrm{d}x=\int_0^{x_{n,1}-a}x^k\,\mathrm{d}x&=\frac{(x_{n,1}-a)^{k+1}}{k+1}\\
\int_{x_{n,n}}^{b}f_n(x)\,\mathrm{d}x=\int_0^{b-x_{n,n}}x^k\,\mathrm{d}x&=\frac{(b-x_{n,1})^{k+1}}{k+1}\text{.}
\end{align*}
Thus $I(f_n)$, and the integration error, is simply
\[\frac{(x_{n,1}-a)^{k+1}}{k+1}+\sum_{i=1}^{n-1}\frac{(x_{n,i+1}-x_{n,i})^{k+1}}{4^k}+\frac{(b-x_{n,1})^{k+1}}{k+1}\text{.}\]
This is straightforward to minimize by picking equally spaced points (we ignore the issue of picking points closer to the endpoints, which is unimportant), whereby the error is bounded below by
\[\sum_{i=0}^{n}C\left(\frac{b-a}{n+1}\right)^{k+1}=C(n+1)\left(\frac{b-a}{n+1}\right)^{k+1}\in\Theta(n^{-k})\text{.}\]
Therefore the method has at most order $k$ convergence, and the theorem is proved.
\end{proof}

There are two main ways of conceptualizing a ``sequence of quadrature methods''. The first is an arbitrary-order family of methods, for example the Newton-Cotes or Gauss-Legendre, with an increasing number of points. The second is to take a fixed quadrature method, and form a composite rule with $n$ subintervals, for all $n$.

Note that the construction obviates all dependence on the weights chosen by the quadrature method. The restriction to internal quadrature methods is not necessary: a point outside $[a,b]$ can easily be made to evaluate to 0, with the result that the error bound becomes worse.

Nor do methods that rely on the derivatives of the function escape this fate. It is straightforward to check that the $k-1$th and lower derivatives of $f_n$ are all 0 at each $x_{n,i}$, and so unless the method uses the $k$th derivative (which doesn't exist in the first place), the construction stands valid.

The size of $S_n$ grows linearly in $n$, which disallows attempts to sidestep the theorem by using less points for evaluation than the number of points known to be bad.

We may then ask the question: if the number of bad points is finitely bounded, can something more be salvaged? The idea is, say with a composite rule, with a sufficient number of subintervals, to confine all the bad points to specific subinterval(s), whereby all other subintervals may achieve their degree-based bound.

The answer is a qualified yes. The proof of Theorem \ref{t1} can be easily modified to use a very small number of bad points, but this only allows improvement by 1 order, to $k+1$. We shall reuse the setting in the \hyperlink{t1}{preface} of the preceding theorem.

\begin{theorem}
\label{t2}
There exists a sequence of functions $\{f_n\}$ along with a sequence of finite sets $\{S_n\}$ satisfying
\begin{itemize}
\item $S_n\subset[a,b]$, and has size $\abs{S_n}=5$ if $k$ is odd, or $\abs{S_n}=4$ if $k$ is even
\item $f_n\in C^{k-1}[a,b]\setminus C^k[a,b]$
\item $f_n\in C^{\omega}([a,b]\setminus S_n)$
\item $\displaystyle\norm{f_n^{(k-1)}}_{\infty,[a,b]}\leq\frac{(b-a)}{4}k!$
\item $\displaystyle\norm{f_n^{(k)}}_{\infty,[a,b]\setminus S_n}=k!$
\end{itemize}
such that the sequence of integration errors of $I_n$ on $f_n$ has order at most $k+1$, or in other words
\[\limsup_{n\rightarrow\infty}\frac{\abs{I_n(f_n)-I(f_n)}}{n^{k+1}}>0\text{.}\]
\end{theorem}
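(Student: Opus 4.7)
The plan is to localize the construction of Theorem~\ref{t1} to a single subinterval. In that proof the error was a sum $\sum_{i=1}^{n-1}(\Delta x_i)^{k+1}/4^k$ of per-basic-unit contributions; keeping only the \emph{single} basic unit whose gap is largest leaves, by pigeonhole, a surviving contribution of order $n^{-(k+1)}$, which is exactly the target order. Concretely, I would let $i^*$ index an $i\in\{1,\dots,n-1\}$ maximizing $\Delta x_i$, define $f_n$ on $[x_{n,i^*},x_{n,i^*+1}]$ by the piecewise formula of the basic unit of Theorem~\ref{t1} (parity-appropriate version), and set $f_n\equiv0$ on the rest of $[a,b]$.

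The first thing to verify is $C^{k-1}$ regularity at the two new gluing points $x_{n,i^*}$ and $x_{n,i^*+1}$, where the basic unit meets zero. This is immediate: the local pieces of the basic unit there are of the form $\pm(x-x_{n,i^*})^k$ and $\pm(x-x_{n,i^*+1})^k$, whose values and first $k-1$ derivatives vanish at the respective centers, matching the zero function on the other side; the $k$th derivative jumps by $\pm k!$, making them genuine $C^{k-1}\setminus C^k$ bad points. Tallying: for odd $k$ the three interior transitions $\bar x_{n,i^*+1/4},\bar x_{n,i^*+1/2},\bar x_{n,i^*+3/4}$ together with the two endpoints give $\abs{S_n}=5$; for even $k$ only the two quarter-points $\bar x_{n,i^*+1/4},\bar x_{n,i^*+3/4}$ survive (the midpoint is analytic in that case), yielding $\abs{S_n}=4$. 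The $L^\infty$ bounds on $f_n^{(k-1)}$ and $f_n^{(k)}$ are directly inherited from Theorem~\ref{t1}, since those were per-basic-unit statements to begin with.

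For the error, every quadrature node $x_{n,j}$ of $I_n$ lies either in the zero region or coincides with one of $x_{n,i^*},x_{n,i^*+1}$ (where the basic unit vanishes); hence $I_n(f_n)=0$ regardless of the weights. The exact integral reduces to the single basic-unit contribution $I(f_n)=(\Delta x_{i^*})^{k+1}/4^k$ by the calculation of Theorem~\ref{t1}, and pigeonhole over the $n-1$ interior gaps of total length at most $b-a$ gives $\Delta x_{i^*}\geq(b-a)/(n-1)$, so the error is $\Theta(n^{-(k+1)})$ and the theorem follows. There is no substantial obstacle: the construction is essentially a corollary of Theorem~\ref{t1} combined with pigeonhole, and the one mild subtlety, the $C^{k-1}$ gluing at the endpoints of the chosen subinterval, is automatic from the local $(x-x_{n,i^*})^k$ form of the basic unit there.
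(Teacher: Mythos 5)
Your construction is the same as the paper's: pick the index $i^*$ maximizing $\Delta x_{i}$, place one basic unit of the Theorem \ref{t1} spline on $[x_{n,i^*},x_{n,i^*+1}]$, and set $f_n\equiv0$ elsewhere; the regularity bookkeeping, the count $\abs{S_n}=5$ (odd $k$) or $4$ (even $k$), the vanishing of $I_n(f_n)$, and the single-unit integral $(\Delta x_{i^*})^{k+1}/4^k$ all match the paper's proof.

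The one step that is wrong as you state it is the pigeonhole. The $n-1$ interior gaps have total length $x_{n,n}-x_{n,1}$, not $b-a$, so what pigeonhole gives is $\Delta x_{i^*}\geq(x_{n,n}-x_{n,1})/(n-1)$; knowing only that the total is \emph{at most} $b-a$ yields no lower bound on the maximum. If the quadrature nodes are all clustered in a tiny subinterval, every interior gap is small and your $f_n$ has negligible integral. The correct statement is that the $n+1$ gaps determined by $a,x_{n,1},\dots,x_{n,n},b$ have total length exactly $b-a$, so the largest of \emph{these} is at least $(b-a)/(n+1)$; when that largest gap is one of the two endpoint gaps, the basic unit is unavailable and you must instead use the endpoint monomial piece from Theorem \ref{t1} (e.g.\ $\pm(x-x_{n,1})^k$ on $[a,x_{n,1}]$, zero elsewhere), whose integral $(x_{n,1}-a)^{k+1}/(k+1)$ is at least as large as $(x_{n,1}-a)^{k+1}/4^k$ for every $k\geq1$, so the $\Theta(n^{-(k+1)})$ bound survives. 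The paper waves at this case in one clause (``an attempt to pick the midpoint $n$ times runs afoul of the fact that the error for the endpoints is even worse''); your write-up should either include the endpoint case explicitly or restrict attention a priori to node sequences whose extreme nodes approach $a$ and $b$.
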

\begin{proof}
By adversarial construction.

Let $i$ be an index such that $\Delta x=x_{n,i+1}-x_{n,i}$ is maximized. Let $f_n$ be a $k$th order spline such that $f_n=0$ outside of the open interval $(x_{n,i},x_{n,i+1})$, and have maximal integral with unit leading coefficient. Explicitly,
\begin{align*}
f_n&=\begin{cases}
(x-x_{n,i})^k,\;&x\in\intcc{x_{n,i},x_{n,i}+\frac{\Delta x}{4}}\\
\left(x-\frac{\Delta x}{2}\right)^k+2\left(\frac{\Delta x}{4}\right)^k,\;&x\in\intcc{x_{n,i}+\frac{\Delta x}{4},x_{n,i}+\frac{\Delta x}{2}}\\
-\left(x-\frac{\Delta x}{2}\right)^k+2\left(\frac{\Delta x}{4}\right)^k,\;&x\in\intcc{x_{n,i}+\frac{\Delta x}{2},x_{n,i}+\frac{3\Delta x}{4}}\\
-(x-x_{n,i+1})^k,\;&x\in\intcc{x_{n,i}+\frac{3\Delta x}{4},x_{n,i+1}}\\
0\quad&\text{otherwise}
\end{cases}\text{ if $k$ is odd,}\\
f_n&=\begin{cases}
(x-x_{n,i})^k,\;&x\in\intcc{x_{n,i},x_{n,i}+\frac{\Delta x}{4}}\\
-\left(x-\frac{\Delta x}{2}\right)^k+2\left(\frac{\Delta x}{4}\right)^k,\;&x\in\intcc{x_{n,i}+\frac{\Delta x}{4},x_{n,i}+\frac{3\Delta x}{4}}\\
(x-x_{n,i+1})^k,\;&x\in\intcc{x_{n,i}+\frac{3\Delta x}{4},x_{n,i+1}}\\
0\quad&\text{otherwise}
\end{cases}\text{ if $k$ is even.}
\end{align*}
This is the same construction as in Theorem 1, except that only one of the basic units is used.

The discussion about the continuity classification of $f_n$ holds, with the ``bad'' set
\[S_n=\left\{x_{n,i}\,,\,x_{n,i}+\frac{\Delta x}{4}\,,\,x_{n,i}+\frac{\Delta x}{2}\,,\,x_{n,i}+\frac{3\Delta x}{4}\,,\,x_{n,i+1}\right\}\]
of size 5; if $k$ is even, the midpoint is analytic as well. The comments on the bounds on derivatives hold.

$I_n(f_n)$ remains 0 as all evaluation points are 0, and the exact same calculation as before shows that
\[I(f_n)=\int_{a}^{b}f_n(x)\,\mathrm{d}x=\int_{x_{n,i}}^{x_{n,i+1}}f_n(x)\,\mathrm{d}x=\frac{(\Delta x)^{k+1}}{4^k}\text{.}\]
With $n$ points to be chosen in $[a,b]$ it is clear that $\Delta x$ is bounded below by $(b-a)/(n+1)$; an attempt to pick the midpoint $n$ times runs afoul of the fact that the error for the endpoints is even worse. Hence we have that
\[\abs{I_n(f_n)-I(f_n)}\geq\frac{(b-a)^{k+1}}{4^k}(n+1)^{-(k+1)}\in\Theta(n^{-(k+1)})\]
with order at most $k+1$, and the theorem is proved.
\end{proof}

We can tighten this result to requiring 3 (or 2, for even $k$) bad points by noticing that the endpoints $x_{n_i},x_{n,i+1}$ do not need to be bad. Instead we can use the analytic continuation of $f_n$ beyond these points (the monomial $(x-x_{i})^{k+1}$), and assume that $I_n$ can exactly integrate the rest of the function; the one piece between those two points alone makes the lower bound.

We may also keep the quadrature result 0 by making the $k+1$th derivative go negative quickly and smoothly connect to the zero function; this however necessitates breaking analyticity.

\section{Discussion and Conclusions.}
It may be asked next, can there be a finite, fixed set of bad points in the function, and how much improvement does that generate? We consider this moot, as when the bad points are fixed, the usual methods may be employed to simply \emph{find} them all, so that we may segment the region of integration at these points. Thus, all problems relating to having less than $N$ derivatives completely vanish.

The main implication of these theorems is that it is asymptotically fruitless to use any method that has 2 degrees or more beyond the differentiability class of the integrand: Theorem \ref{t2} proves the extra degrees have no effect.

An interesting observation is of the non-local effect of a single point of non-niceness destroying convergence everywhere; this has been noted in interpolation theory, under the name of the \emph{principle of contamination} \cite{saff}. Given the strong connection between interpolation polynomials and numerical integration, seeing the principle apply here is hardly a surprise.

Another insight that drops out of the details is a heuristic to minimize the impact of this lower bound: minimize the maximal distance between two evaluation points. That equally spaced points are optimal against this adversary speaks to the difference between optimizing for degrees of exactness and optimizing for absolute error under only $N$ derivatives.

There seems to be a penalty in the error bound, for example found in \cite{cruz}, for achieving a higher degree of exactness, if the extra derivatives for the higher order bound do not exist.

Thus of course the high degree Newton-Cotes rules retain their severe disadvantages, and we still do not recommend them; rather that a highly composite trapezoid or Simpson's rule be considered instead of the theoretically degree-superior Gauss-Legendre or Curtis-Clenshaw families, even at equal degree.

For future work we would like to know if the order $N+1$ in Theorem \ref{t2} is sharp, or if we can find a way to force order $N$ convergence with a fixed finite number of bad points.
\pagebreak
\paragraph{Acknowledgments.}
The author would like to thank Rajesh Pereira, University of Guelph, for help on formatting and literature search. This work was supported by a Natural Sciences and Engineering Research Council of Canada Postgraduate Scholarship.

\end{document}